\theoremstyle{theorem}
\newtheorem{theorem}{Theorem}
\newtheorem{conjecture}[theorem]{Conjecture}
\newtheorem{problem}[theorem]{Problem}
\newtheorem{proposition}[theorem]{Proposition}
\theoremstyle{definition}
\newtheorem{definition}[theorem]{Definition}
\theoremstyle{remark}
\newtheorem{remark}[theorem]{Remark}
\newtheorem{example}[theorem]{Example}
\newtheorem{question}[theorem]{Question}
\numberwithin{theorem}{section}
\newcommand{\tightoverset}[2]{%
  \mathop{#2}\limits^{\vbox to -.5ex{\kern-1.05ex\hbox{$#1$}\vss}}}
\def\Gcal{{\mathcal G}}
\newcommand{\CC}{\mathbb{C}}
\newcommand{\FF}{\mathbb{F}}
\newcommand{\QQ}{\mathbb{Q}}
\newcommand{\RR}{\mathbb{R}}
\newcommand{\ZZ}{\mathbb{Z}}
\newcommand{\MOD}[1]{~(\textup{mod}~#1)}
\renewcommand{\pmod}{\MOD}
\title{Ring-LWE Cryptography for the Number Theorist}
\author{Yara Elias, Kristin E. Lauter,  Ekin Ozman, Katherine E. Stange }
\address{Yara Elias:
Department of Mathematics and Statistics, McGill University, Montreal, Quebec, Canada}
\email{yara.elias@mail.mcgill.ca}
\address{Kristin E. Lauter:Microsoft Research, One Microsoft Way, Redmond, WA 98052}
\email{klauter@microsoft.com}
\address{Ekin Ozman: Department of Mathematics, Faculty of Arts and Science, Bogazici University, 34342, Bebek-Istanbul, Turkey}
\email{ekin.ozman@boun.edu.tr}
\address{Katherine E. Stange: Department of Mathematics, University of Colorado, Campux Box 395, Boulder, Colorado 80309-0395}
\email{kstange@math.colorado.edu}
\date\today
\begin{document}

\maketitle

\begin{abstract}
        In this paper, we survey the status of attacks on the \emph{ring and polynomial learning with errors problems} (RLWE and PLWE).  Recent work on the security of these problems \cite{EHL,ELOS} gives rise to interesting questions about number fields.  We extend these attacks and survey related open problems in number theory, including spectral distortion of an algebraic number and its relationship to Mahler measure, the monogenic property for the ring of integers of a number field, and the size of elements of small order modulo $q$.
\end{abstract}

\keywords{}

\section{Introduction}

%Remember this is aimed at the number theorist so no cryptography assumed!

%\begin{enumerate}
%  \item History of lattice based crypto and LWE for lattices, including general idea of LWE.
%  \item Cryptographic advantages of lattice-based crypto.
%  \item The appearance of ideal lattices in lattice-based crypto and a vague idea of ring- and PLWE.
%  \item the advantages of these hard problems (e.g. homomorphic), and the excitement surrounding them.
%        \item The use of cyclotomic fields and why.
%        \item The importance of studying the general question for various number fields.
%        \item How a study of the security is a fertile place to do number theory
%        \item We will state lots of open problems and new directions.
%\end{enumerate}

Public key cryptography relies on the existence of hard computational problems in
mathematics:  i.e., problems for which there are no known general polynomial-time algorithms.
Hard mathematical problems related to lattices were first suggested as the basis for cryptography almost two decades ago (\cite{A,AD,HPS}).
While other better-known problems in public key cryptography such as factoring and the discrete logarithm problem are closely tied to computational number theory, lattice-based cryptography has seemed somewhat more distant. Recent developments, including the introduction of the ring-learning with errors problem instantiated in the ring of integers of a number field (\cite{LPR10}), have connected the area to new questions in computational number theory.

At the same time, lattice-based cryptography has seen a dramatic surge of activity.  Since there are no known polynomial time algorithms for attacking standard lattice problems on a quantum computer (in contrast to the case for widely deployed cryptographic systems such as RSA, discrete log, and elliptic curves), lattice-based cryptography is considered to be a promising cryptographic solution in a post-quantum world.

One of the most exciting recent developments has been the construction of \emph{fully homomorphic encryption} schemes (\cite{Gen09, BV11, BV11b, BGV11,GHS11})
which allow meaningful operations to be performed on data without decrypting it:  one can add and multiply encrypted numbers, returning the encrypted correct result, \emph{without knowledge of the plaintext or private key}.  The addition and multiplication of ciphertexts is possible due to the ring structure inherent in polynomial rings:  these translate into {\tt AND} and {\tt OR} gates which can be used to build arbitrary circuits.  Exciting applications include privacy problems
in the health sector for electronic medical records, predictive analysis and learning from sensitive private data, and genomic computations (\cite{LNV,GLN,BLN,LLN}).

These new homomorphic encryption solutions are based on versions of hard ``learning problems'' with security reductions to and from standard lattice problems such as the shortest vector problem (\cite{Regev}). The idea behind the whole class
of learning problems is that it is hard to ``learn'' a secret vector, given only sample inner products of that vector with other random vectors, provided these products are obscured by adding a small amount of Gaussian noise (``errors'').

The ring version, which we call Ring-LWE or RLWE, was introduced in \cite{LPR10},
presenting a fundamental hardness result which can be described informally as follows: for any ring of integers $R$, any algorithm that solves the (search version of the) Ring-LWE problem yields a comparably efficient {\it quantum} algorithm that finds approximately shortest vectors in {\it any} ideal lattice in R.

%In \cite{LPR10}, the authors used the lattice formed by the ring of integers of a $2$-power cyclotomic number field embedded into $\RR^n$ under a variant of the Minkowski embedding.

Soon after the introduction of Ring-LWE, an efficient cryptosystem allowing for homomorphic multiplication was proposed in~\cite{BV11} based on a variant of the RLWE problem, the Polynomial Learning With Errors problem (here denoted PLWE). Improvements to that cryptosystem (e.g.~\cite{BGV11,GHS11}) have followed in the same vein, with the same hardness assumption.  The reader should note that the terminology of ``Ring-LWE'' vs. ``Poly-LWE'' is not entirely standard, and some authors use ``Ring-LWE'' to refer to a larger class of problems including both.

We focus in this paper on PLWE, specified by the following choices:

\begin{enumerate}
        \item a polynomial ring $P_q=\FF_q[x]/(f(x))$, with $f(x)$  a monic irreducible polynomial of degree $n$ over $\ZZ$ which splits completely over $\FF_q$,
        \item a basis for the polynomial ring, which will often be taken to be a power basis in the monogenic case (in particular, the choice of a basis can be used to endow the ring with the standard inner product on the ring),
        \item and a parameter specifying the size of the Gaussian noise to be added (the size of the ``error''), spherical with respect to this inner product.
\end{enumerate}

We also focus on RLWE obtained from the same setup, but with the inner product instead given by the Minkowski embedding of a ring of integers of the form $\ZZ[x]/(f(x))$.  More general situations, including the case where the defining polynomial for the number ring does not split modulo $q$, or the case where $q$ is composite, or the distribution is non-spherical or non-Gaussian, are considered in the cryptographic literature, but the setup above will suffice for our present purpose, which is to give a number theorist an entr\'ee into the subject.%Note that $q$ is chosen to be very large, so that the Gaussian `bell' sits in a very short interval in comparison with $q$.

A key point is that for cryptographic applications, the errors must be chosen to be relatively small, to allow for correct decryption.  For PLWE, ``small'' refers to the coefficient size (absolute value of the smallest residue), where the error is a polynomial, i.e. represented according to a polynomial basis for the ring.  %, i.e., $P_q=\FF_q[x]/(f(x))$, where $f(x)=x^n+1$ for $n$ equal to a power of $2$ in the case of $2$-power cyclotomic rings.
But to relate RLWE to other standard lattice problems, \cite{LPR10} considers the embedding of the ring $\ZZ[x]/(f(x))$ into the real vector space $\RR^n$ under the Minkowski embedding (before reduction modulo $q$), and uses a Gaussian in $\RR^n$; this induces an entirely different distribution on the error vectors for general number rings.  It was shown in~\cite{LPR10} and~\cite{DD12} that in the case of $2$-power cyclotomic rings, the distributions are the same.   However, in~\cite{ELOS} it was shown that in general rings the distortion of the distribution is governed by the largest singular value of the change-of-basis matrix between the Minkowski and the polynomial basis.  Thus the RLWE and PLWE distributions are not equivalent in general rings.

Although RLWE and PLWE for cyclotomic rings, particularly two-power cyclotomic rings, are the current candidates for practical lattice-based homomorphic encryption with ideal lattices, it will be important for a full study of their security to consider the RLWE and PLWE problems for general rings.  This includes studying the two problems independently, and analysing their relationships via the distortion of distributions just mentioned.  One lesson from \cite{ELOS} is that deviating from these recommended candidates can lead to an insecure system.

The RLWE and PLWE problems are formulated as either `search' or `decision' problems (see Section \ref{PLWE} below).  A security reduction was presented in~\cite{LPR10} showing that, for any {\it cyclotomic} ring $R$, an algorithm for the decision version of the Ring-LWE problem yields a comparably efficient algorithm for the search version of the Ring-LWE problem. This search-to-decision reduction was subsequently extended to apply to any Galois field in~\cite{EHL}.

In~\cite{EHL}, an attack on PLWE was presented in rings $P_q=\FF_q[x]/(f(x))$, where $f(1) \equiv 0 \pmod{q}$.  In addition, ~\cite{EHL} gives sufficient conditions on the ring so that the
`search-to-decision' reduction for RLWE holds, and also that RLWE instances can be translated into PLWE instances, so that the RLWE decision problem can be reduced to the PLWE decision problem.
Thus, if a number field $K$  satisfies the following six conditions simultaneously, then the results of ~\cite{EHL} give an attack on the search version of RLWE:
  \begin{enumerate}
        \item \label{galois} $K=\mathbb Q(\beta)$ is Galois of degree $n$.
        \item\label{split} The ideal $(q)$ splits completely in $R=\mathcal O_K$, the ring of integers of $K$,  and $q \nmid [R:\mathbb Z[\beta]]$.
        \item \label{mono} $K$ is monogenic, i.e., the ring of integers $R=\mathcal O_K$ of $K$ is generated by one element $R=\mathbb Z[\beta]$.
        \item \label{orthogonal} The transformation between the canonical embedding of $K$ and the power basis representation of $K$ is given by a scaled orthogonal matrix.
        \item \label{small} If $f$ is the minimal polynomial of $\beta$, then $f(1) \equiv 0 \; \pmod q$.
        \item \label{big} The prime $q$ can be chosen suitably large.
\end{enumerate}
The first two conditions are sufficient for the RLWE search-to-decision reduction; the next two conditions are sufficient for the RLWE-to-PLWE reduction; and the last two conditions are sufficient for the attack on PLWE.
Unfortunately, it is difficult to construct number fields satisfying all six conditions simultaneously.  In ~\cite{EHL} examples of number fields were given which are vulnerable to the attack on PLWE.

In~\cite{ELOS},  the attack on PLWE was extended by weakening the conditions on $f(x)$ and the reduction from RLWE to PLWE was extended by weakening condition (\ref{orthogonal}).  A large class of fields were constructed where the attack on PLWE holds and RLWE samples can be converted to PLWE samples, thus providing examples of weak instances for the RLWE problem.

Exciting number theory problems often arise from cryptographic applications.
In this paper we survey and extend the attacks on the PLWE and RLWE problems and raise associated number theoretic questions. In Section \ref{PLWE}, we recall the PLWE and RLWE problems. In Sections \ref{sum} and \ref{residue} we survey and extend the attacks on PLWE  which were introduced in~\cite{EHL,ELOS}. In Section~\ref{reduction}, we explain the reduction between the RLWE and PLWE problems.  Finally in Section \ref{NT} we raise related questions in number theory; in particular, we investigate the spectral distortion of an algebraic number and its relationship to Mahler measure, the monogenic property for the ring of integers of a number field, and the size of elements of small order modulo $q$.

\section{The fundamental hard problems:  PLWE and RLWE} \label{PLWE}

\subsection{PLWE}

Take $f(x) \in\ZZ[x]$ to be monic and irreducible of degree $n$.  Suppose $q$ is a prime modulo which $f(x)$ factors completely (this is not necessary for the definition of the problem, but we will assume this throughout the paper).  Write
\[
        P := \ZZ[x]/f(x), \quad P_q := P/qP \cong \FF_q[x]/f(x).
\]
Let $\sigma \in \RR^{>0}$.  By a \emph{Gaussian distribution $\mathcal{G}_{\sigma}$ of parameter $\sigma$},
we mean a Gaussian of mean $0$ and variance $\sigma^2$ on $P$ which is spherical with respect to the power basis $1, x, x^2, \ldots, x^{n-1}$ of $P$.  The prime $q$ is generally assumed to be polynomial in $n$, sometimes as large as $2^{50}$ but in some applications much smaller (even as small as $2^{12}$), and $\sigma$ is taken fairly small (perhaps $\sigma = 8$), so that in practice the tails of the Gaussian will decay to negligible size well before its variable reaches size $q$.  Since $P$ has integer coordinates, we must `discretize' the Gaussian in an appropriate fashion; the result is simply referred to as a \emph{discretized Gaussian}.  We will not go into the technical details in this paper, but instead refer the reader to \cite{LPR10}.

There are two standard PLWE problems, quoted here from \cite{BV11}.  The difficulty involves determining a secret obscured by a small \emph{error} drawn from the discretized Gaussian.

\begin{problem}[Search PLWE Problem] \label{PLWES}
         Let $s(x) \in P_q$ be a secret. The \emph{search PLWE problem}, is to discover $s(x)$ given access to arbitrarily many independent samples of the form $(a_i(x), b_i(x) := a_i(x)s(x) + e_i(x)) \in P_q \times P_q$, where for each $i$, $e_i(x)$ is drawn from a discretized Gaussian of parameter $\sigma$, and $a_i(x)$ is uniformly random.
\end{problem}

The polynomial $s(x)$ is the \emph{secret} and the polynomials $e_i(x)$ are the \emph{errors}.
There is a decisional version of this problem:

\begin{problem}[Decision PLWE Problem] \label{PLWED} Let $s(x) \in P_q$ be a secret.  The \emph{decision PLWE problem} is to distinguish, with non-negligible advantage, between the same number of independent samples in two distributions on $P_q \times P_q$.  The first consists of samples of the form $(a(x), b(x) := a(x) s(x) + e(x) )$ where $e(x)$ is drawn from a discretized Gaussian distribution of parameter $\sigma$, and $a(x)$ is uniformly random.  The second consists of uniformly random and independent samples from $P_q \times P_q$.
\end{problem}

Search-to-decision reductions were proved for cyclotomic number fields in~\cite{LPR10} and extended to work for Galois number fields in~\cite{EHL}.
Of course, the phrase `to distinguish' must be interpreted to mean that the distinguisher's acceptance probabilities, given PLWE samples versus uniform samples, differ by a non-negligible amount.
%providing an algorithm that returns the correct answer with  probability  greater than $\frac12$, with non-negligible difference.

\subsection{RLWE} \label{RLWE}

The original formulation of the hard learning problem for rings, RLWE, presented in ~\cite{LPR10}, was based on the ring of integers, $R$,  of a number field.  The authors studied a general class of problems where the error distribution was allowed to vary.

Here we are concerned with only two choices of distributions.  The first is to consider rings, $R$, which are isomorphic to a polynomial ring $P$, and study the PLWE problem (PLWE was stated as a ``variant'' of RLWE in~\cite{LPR10} and ~\cite{BV11}).  The distribution in this case is with respect to the polynomial basis of one of its polynomial representations.

The second is to choose the error according to a discretized Gaussian with respect to a special basis of the ambient space in which $R$ was embedded via the Minkowski embedding.  We will refer to this as RLWE.  Therefore, in our language, when $R$ is isomorphic to some polynomial ring $P$, RLWE differs from PLWE only in the error distribution.

We will state the fundamental RLWE problems and then discuss the relationship between the RLWE and PLWE problems.  Let $K$ be number field of degree $n$ with ring of integers $R$. Let $R^\vee$ denote the dual of $R$,
\[
        R^\vee = \{ \alpha \in K: Tr(\alpha x) \in \ZZ \mbox{ for all } x \in R \}.
\]
Let us write
$R_q := R/qR$ and $R_q^\vee = R^\vee/qR^\vee$.  We will embed $K$ in $\CC^n$ via the usual Minkowski embedding.  The vector space $\CC^n$ is endowed with a standard inner product, and we will use the spherical Gaussian with respect to this inner product, discretized to $R^\vee$, as the discretized Gaussian distribution.  We will refer to this as the \emph{canonical discretized Gaussian}.  This will \emph{not}, in general, coincide with the discretized Gaussian defined in PLWE for a $P \cong R$, and this is the fundamental difference between the two problems.

The standard RLWE problems for a canonical discretized Gaussian are as follows.

\begin{problem}[Search RLWE Problem \cite{LPR10}]
        Let $s \in R_q^\vee$ be a secret.  The \emph{search RLWE problem} is to discover $s$ given access to arbitrarily many independent samples of the form $(a, b:=as +e )$ where $e$ is drawn from the canonical discretized Gaussian and $a$ is uniformly random.
\end{problem}

\begin{problem}[Decision RLWE Problem \cite{LPR10}]
        Let $s \in R_q^\vee$ be a secret.  The \emph{decision RLWE problem} is to distinguish with non-negligible advantage between the same number of independent samples in two distributions on $R_q \times R_q^\vee$.  The first consists of samples of the form $(a, b:=as +e )$ where $e$ is drawn from the canonical discretized Gaussian and $a$ is uniformly random, and the second consists of uniformly random and independent samples from $R_q \times R_q^\vee$.
\end{problem}

\bigskip

An isomorphism between $R$ and an appropriate polynomial ring $P$ can be used to relate an instance of the RLWE problem to an instance of the PLWE problem.  In particular, one requires $R$ to be monogenic (having a power basis).  Analysing the relationship between the two problems involves a close look at the change of basis under an isomorphism from $R$ to the appropriate $P$.  We will take up this issue in Section \ref{reduction}.

\section{Summary of Attacks }\label{sum}
\label{sec:attack}

In practice today, parameters for cryptosystems based on the RLWE and PLWE problems are set according to two known attacks, the {\it distinguishing attack} (\cite{MR,RS}) and the {\it decoding attack} (\cite{LP}).  These attacks work in general for learning-with-error problems and do not exploit the special structure of the ring versions of the problem.
In this paper, we will focus solely on the new attacks  presented in \cite{EHL} and \cite{ELOS} that exploit the special number-theoretic structure of the PLWE and RLWE rings.

The attacks presented in \cite{EHL} and \cite{ELOS} can be described in terms of the  ring homomorphisms from $P_q$ to smaller rings.  As $P_q \cong \FF_q^n$, the only candidates are the projections to each factor:
\[
        \pi_\alpha: P_q \rightarrow \FF_q, \quad p(x) \mapsto p(\alpha)
\]
for each  root $\alpha$ of $f(x)$.  In $P_q$, the short vectors sampled by the Gaussian are easy to recognise since they have small coefficients.  But they are hard to tease out of $b(x) = a(x)s(x) + e(x)$ without knowledge of $s(x)$, and the possibilities for $s(x)$ are too many to examine exhaustively.  By contrast, in a small ring like $\FF_q$, it is easy to examine the possibilities for $s(\alpha)$ exhaustively.  And the ring homomorphism preserves the relationship of the important players: $b(\alpha) = a(\alpha)s(\alpha) + e(\alpha)$.  Hence we can loop through the possibilities for $s(\alpha)$, obtaining for each the putative value
\[
        e(\alpha) = b(\alpha) - a(\alpha)s(\alpha).
\]
The Decision Problem for PLWE, then, is solved as soon as we can recognize the set of $e(\alpha)$ that arise from the Gaussian.

Unfortunately (or fortunately), one does not expect to be able to do this in general.  Heuristically, let $\mathcal{S} \subset P_q$ denote the subset of polynomials that are produced by the Gaussian with non-negligible probability.  In $P_q$, parameters are such that this is a small set.  But $\FF_q$ is a much smaller ring and one expects that generically, the image of $\mathcal{S}$ will `smear' across all of $\FF_q$.  Something quite special must happen if we expect the image of $\mathcal{S}$ to remain confined to a small subset of $\FF_q$, and hence be recognisable.

That `something special' is certainly possible, however:  suppose that $\alpha=1$.  The polynomials $g(x) \in \mathcal{S}$ have small coefficients, and hence have small images $g(1)$ in $\FF_q$.  This is simply because $n$ is much smaller than $q$, so that the sum of $n$ small coefficients is still small modulo $q$.
More generally, all of the attacks suggested in \cite{EHL} and \cite{ELOS} come down to considering $\alpha$ with certain advantageous properties, so that the image of $\mathcal{S}$ can be recognised.

The cyclotomic cases currently under consideration for PLWE and RLWE are uniquely protected against this occurrence:  $\alpha = 1$ is never a root modulo $q$ of a cyclotomic polynomial of degree $> 1$ when $q$ is sufficiently large.

%Known attacks on RLWE are based on a study of $b_i(\alpha)-sa_i(\alpha)$ when $\alpha$ has certain advantageous properties.
\bigskip

\subsection{Attacking $\alpha=1$}\label{alpha1}

The approach described above and the $\alpha=1$ attack was first presented in \cite{EHL}. The details are as follows. Suppose $$f(1) \equiv 0 \hbox{ mod } q.$$  We are given access to a collection  of samples $(a_i(x), b_i(x))$.  We wish to determine if a sample is \emph{valid}, of the form $$b_i(x) = s(x)a_i(x) + e_i(x)$$ for $e_i(x)$ produced by a Gaussian, or \emph{random} (uniformly random).  The algorithm is as follows:

\noindent
{\bf Algorithm 1:}
\begin{enumerate}
        \item Let the set of valid guesses be $S = \FF_q$.
        \item Loop through the available samples.  For each sample:
                \begin{enumerate}
        \item Loop through guesses $s \in S$ for the value of $s(1)$.  For each $s$:
                \begin{enumerate}
        \item Compute $ e_i := b_i(1)-sa_i(1) $
        \item If $e_i$ is \emph{not} small in absolute value\footnote{meaning residue of smallest absolute value} modulo $q$, then conclude that the sample cannot be valid for $s$ with non-negligible probability, and remove $s$ from $S$.
        \end{enumerate}
        \end{enumerate}
\item If $S = \emptyset$, conclude that the sample was random.  If $S$ is non-empty, conclude that the sample is valid.
\end{enumerate}

        If the guess $s$ is correct, then $e_i = e_i(1)=\sum_{j=1}^n e_{ij}$
where $e_{ij}$ are chosen from a Gaussian $\mathcal{G}_{\sigma}$ of parameter $\sigma$.  It follows that $e_i(1)$ are approximately sampled from a Gaussian $\Gcal_{\sqrt{n}\sigma}$ of parameter $\sqrt{n}\sigma$ where $n\sigma^2 \ll q$.

\subsection{Attacking $\alpha$ of small order}\label{order}

The following attack described and developed in \cite{EHL, ELOS} requires $\alpha$ to have small order mod $q$.  The fundamental idea is the same as for the $\alpha=1$ attack, except that to discern whether or not $e_i(\alpha)$ is a possible image of a Gaussian-sampled error is more complicated.

Assume that $\alpha^r \equiv 1 \mod{q}$, then
\begin{equation*}
e(\alpha)= \sum_{i=1}^n e_i \alpha^i = (e_r+e_{2r}+ \cdots)+ \cdots + \alpha^{r-1}(e_{r-1}+e_{2r-1}+\cdots).
\end{equation*}
If $r$ is small enough, $e(\alpha)$ takes on only a small number of values modulo $q$.  This set of values may not be easy to describe, but $q$ is small enough that it can be enumerated and stored.  The attack proceeds as for $\alpha=1$ except that to determine if a sample is potentially valid for $s$ in step (2)(a)(ii), we compare to the stored list of possible values.
%, and we can efficiently distinguish whether a value modulo $q$ belongs to that subset if the subset is small enough relative to $q$.

\section{Attacking $\alpha$ of small residue}\label{residue}

A third attack described in \cite{ELOS} is based on the size of the residue $e_i(\alpha)$ mod $q$.  This is more subtle.  Here, the errors $e(\alpha)$ may potentially take on all values modulo $\FF_q$ with non-negligible probability.  But it may be possible to notice if the probability distribution across $\FF_q$ is not uniform, given enough samples.

This method of attack differs from the previous ones, but is also applicable to $\alpha=1$ and $\alpha$ of small order, so all cases will be treated together.

%For certain parameters $n$, $q$ and $\alpha$, $e_i(\alpha) \in (-q/4,q/4)$, which allows us to distinguish them from  random elements mod $q$. The attack developped in the next section is a more elaborated version of this attack.

%In this section, we describe an attack on the PLWE problem assuming that
Assume that
\begin{align} \label{root}
f(\alpha) \equiv 0 \mod q
\end{align} for some $\alpha$.
Let $E_i$ be the event that $$b_i(\alpha)-ga_i(\alpha) \mod q \hbox{ is in the interval } [-q/4, q/4)$$ for some sample $i$ and guess $g$ for $s(\alpha) \mod q$.
        We wish to compare the probabilities
        $$P(E_i \ | \ \mathcal{D} = \mathcal{U}) \ \ \mbox{and} \ \ P(E_i\ | \ \mathcal{D} = \mathcal{G}_{\sigma}).$$
Here, $\mathcal{D} = \mathcal{U}$ refers to the situation where $b_i$ is uniformily random, while $\mathcal{D} = \mathcal{G}_{\sigma}$ refers to the situation where $b_i$ is obtained as $a_i s +e_i$ for some secret $s$, where $e_i$ follows a Gaussian $\mathcal{G}_{\sigma}$ truncated at $2 \sigma$ (in practice, the Gaussian is truncated as the tails decay to negligible values).
If $\mathcal{D} = \mathcal{U}$, then $b_i(\alpha)-ga_i(\alpha)$ is random modulo $ q$ for all guesses $g $, that is,
$$P(E_i \ | \ \mathcal{D} = \mathcal{U}) =\dfrac{1}{2}.$$
If $\mathcal{D} = \mathcal{G}_{\sigma}$, then $b_i(\alpha)-s(\alpha)a_i(\alpha)=e_i(\alpha) \mod{q}$. Indeed, the terms of $b_i(\alpha)-s(\alpha) a_i(\alpha)$ that are a multiple of $f$ vanish at $\alpha$ modulo $q$ by Assumption \eqref{root}.
We consider $$e_i(\alpha)=\sum_{j=0}^{n-1} e_{ij} \alpha^j,$$ where $e_{ij}$ is chosen according to the distribution $\mathcal{G}_{\sigma}$ and distinguish three cases corresponding to
\begin{enumerate}
\item $\alpha = \pm 1$ \label{one}
\item $\alpha \neq \pm 1$ and $\alpha$ has small order $r$ modulo $ q$ \label{two}
\item $\alpha \neq \pm 1$ and $\alpha$ is not of small order $r$ modulo $ q$ \label{three}
\end{enumerate}
We will now drop the subscript $i$ for simplicity.  In Case \eqref{one}, the error $e( \alpha )$ is distributed according to $\mathcal{G}_{ \bar{\sigma}}$ where $$\bar{\sigma}=  \sigma \sqrt{n}.$$
In Case \eqref{two}, the error can be written as
$$e(\alpha)= \sum_{i=0}^{r-1} e_i \alpha^i = (e_{0}+e_{r}+ \cdots) + \alpha(e_1+e_{r+1}+\cdots) + \cdots + \alpha^{r-1}(e_{r-1}+e_{2r-1}+\cdots)$$
where we assume that $n$ is divisible by $r$ for simplicity.
For $j=0, \cdots,r -1,$ we have that $$e_j + e_{ j+r } + \cdots + e_{  j+ n-r }  $$ is distributed according to $ \mathcal{G}_{\tilde{\sigma} }$ where $$ \tilde{\sigma} = \sigma \sqrt{\frac{n}{r}}.$$
As a consequence $e(\alpha)$ is sampled from $\mathcal{G}_{ \bar{\sigma} }$ where $$\bar{\sigma}^2=\sum_{i=0}^{r-1} \tilde{\sigma}^2 \alpha^{2i} = \sum_{i=0}^{r-1} \dfrac{n}{r} \sigma^2 \alpha^{2i} = \dfrac{n}{r} \sigma^2 \dfrac{\alpha^{2r}-1}{\alpha^2-1}.$$
In Case \eqref{three}, the error $e( \alpha )$ is distributed according to $\mathcal{G}_{ \bar{\sigma}}$ where $$ \bar{\sigma}^2= \sum_{i=0}^{n-1} \sigma^2 \alpha^{2i}= \sigma^2 \dfrac{\alpha^{2n}-1}{\alpha^2-1}.$$
If $\dfrac{q}{4} \geq 2 \bar{\sigma}$, then errors always lie in $[-\frac{q}{4}, \frac{q}{4} )$ and
$$P(E_i\ | \ \mathcal{D} = \mathcal{G}_{\sigma}) = 1.$$
% \left(\int_0^{ 2 \bar{\sigma}} \mathcal{G}_{\bar{\sigma}} \right)^{-1} \left(  \int_0^{2 \bar{\sigma}} \mathcal{G}_{\bar{\sigma}} \right)=1.$$
Otherwise, assuming for simplicity that $ N=\dfrac{2 \bar{\sigma}-q/2}{q} $ is an integer, we have
$$P(E_i\ | \ \mathcal{D} = \mathcal{G}_{\sigma}) =
 \left(\int_0^{ 2 \bar{\sigma}} \mathcal{G}_{\bar{\sigma}} \right)^{-1} \left(  \int_0^{\frac{q}{4}} \mathcal{G}_{\bar{\sigma}} +\sum_{k=0}^{N-1 }\int_{\frac{3q}{4}+kq}^{\frac{5q}{4}+kq} \mathcal{G}_{\bar{\sigma}} \right).$$
In the situation where this value exceeds $1/2$, i.e., $P(E_i\ | \ \mathcal{D} = \mathcal{G}_{\sigma})=\dfrac{1}{2}+\epsilon$ with $\epsilon>0$, the following algorithm attacks PLWE.
Let $$N= \left\lceil \dfrac{\ell q+\epsilon \ell}{2} \right\rceil$$ where $\ell$ is the number of samples observed.
For each guess $g$ mod $q$, we compute $b_i - g a_i \mod q $ for $i=1, \cdots , \ell$.
We denote by $C$ the number of elements obtained in the interval $[-q/4, q/4)$.
If $C<N$, the algorithm outputs $$\mathcal{D} = U,$$ otherwise, the algorithm outputs $$\mathcal{D} = \mathcal{G}_{\sigma}.$$

In the analysis of the probability of success of the algorithm, we denote by $B$ the binomial distribution and by $F$ the cumulative Binomial distribution.
If $\mathcal{D} = \mathcal{U}$, the algorithm is successful with probability
$$P(C<N | \mathcal{D} = U)=F(N-1; \ell q, \frac{1}{2}).$$
If $\mathcal{D} = \mathcal{G}_{\sigma}$,
we denote by $C_s$ the number of elements of the form $b_i -s a_i \mod q $ in the interval $[-q/4, q/4)$. In this case, the algorithm is successful with probability
\begin{align*}
& P\left(C \geq N|\mathcal{D} = \mathcal{G}_{\sigma} \right)=\sum_{i=0}^{\ell} P\left( C-C_s \geq N-i \right) \times P\left( C_s =i \right)\\
&=\sum_{i=0}^{\ell} \left(1-F(N-i-1,\ell q-\ell,1/2) \right) \times B(i, \ell , 1/2+ \epsilon) \\
\end{align*}
When $\epsilon>0$, the algorithm is successful since
\begin{align*}
& \dfrac{1}{2}(P(C<N | \mathcal{D} = U)+P(C \geq N|\mathcal{D} = \mathcal{G}_{\sigma} )) \\
& = \dfrac{1}{2}(P(C<N | \mathcal{D} = U)+1-P(C < N|\mathcal{D} = \mathcal{G}_{\sigma} )) \\
& = \dfrac{1}{2} + \dfrac{1}{2}(P(C<N | \mathcal{D} = U)-P(C < N|\mathcal{D} = \mathcal{G}_{\sigma} )) > \dfrac{1}{2}
\end{align*}

\begin{example} \label{Ex4.1}
In Case \eqref{one}, when $n=2^{10}$, $q \approx 2^{50}$, and $\sigma=8$, we can compute $\epsilon \approx 0.5$. Therefore, the attack is successful for any irreducible polynomial of degree $2^{10}$ and with a root $1$ mod $q$.
\\
In Case \eqref{two}, when $n=2^9$, $q\approx 2^{50}$, $\sigma=8$, and $\alpha=q-1$, $\alpha$ has order 2 and we can compute $\epsilon \approx 0.002$. This is particularly interesting since there is an irreducible polynomial with these properties that generates a power of 2 cyclotomic number field \cite{ELOS}; however, it is not the usual cyclotomic polynomial.
\\
In Case \eqref{three}, when $n=2^6$, $q\approx 2^{60}$, $\sigma=8$, and $\alpha=2$, computations show that $\epsilon=0.02$. Therefore, this attack is successful for any irreducible polynomial of degree $2^6$ with a root $\alpha = 2$ modulo a prime  $q \approx 2^{60}$.
\end{example}

\section{ RLWE-to-PLWE reduction} \label{reduction}
\label{sec:reduc}

Suppose that $K$ is a number field, and $R$ is its ring of integers.  For technical reasons, we give a slight variant on the Minkowski embedding, which is as follows:
$     \theta : K \rightarrow \RR^n $
\begin{align*}
\theta(r) :=  (\sigma_1(r), \ldots, \sigma_{s_1}(r), & Re(\sigma_{s_1+1}(r)), \ldots
 \ldots Re(\sigma_{s_1 + s_2}(r)),\\
& Im(\sigma_{s_1 + 1}(r)), \ldots, Im(\sigma_{s_1+s_2}(r)) ).
\end{align*}
where the $\sigma_i$ are the $s_1 + s_2$ embeddings of $K$, ordered so that the $s_1$ real embeddings are first, and the $s_2$ complex embeddings are paired so that $\overline{\sigma_{s_1+k}} = \sigma_{s_1 + s_2 + k}$.

A spherical Gaussian of parameter $\sigma$ with respect to the usual inner product on $\RR^n$ can be discretized to the \emph{canonical discretized Gaussian} on $R$ or its dual $R^\vee$.

Suppose $R \cong P$ for some polynomial ring $P$ under a map $\alpha \mapsto x$ for some root $\alpha$ of $f(x)$.  Suppose further that $R$ is monogenic.  Then $R^\vee \cong P$ also as $R$-modules (as its different ideal is principal).  For RLWE, $\RR\otimes R^\vee$ is equipped with a basis $\mathbf{b}_i$, $i=0, \ldots, n-1$ with respect to which the Gaussian is spherical (the standard basis of $\RR^n$, pulled back by $\theta$).  For PLWE, $\RR \otimes P$ is equipped with such a basis also, i.e., the standard power basis $x^{i}$, $i=0,\ldots, n-1$.  To relate the two problems, one must write down the change-of-basis matrix between them.  It is the matrix
\[
        N_\alpha := \gamma M_\alpha^{-1}: \RR \otimes R^\vee \rightarrow \RR \otimes P
\]
where $\gamma$ is such that $R^\vee = \gamma R$, and where $M_\alpha$ is the matrix with columns $[\alpha^i]_{\mathbf{b}}$ (i.e., the $i$-th column is the element $\alpha^i$ represented with respect to the basis $\mathbf{b} = \{ \mathbf{b}_i \}$).

The properties of $N_\alpha$ determine how much the Gaussian is distorted in moving from one problem to the other.  If it is not very distorted, then solving one problem may solve the other.

Details are to be found in \cite{ELOS}, but in short, the \emph{normalized spectral norm} gives a good measure of `distortion'.  This is defined for an $n\times n$ matrix $M$ by
\[
        ||M||_2/\operatorname{det}(M)^{1/n}.
\]

%In other words,
%\[
%        \gamma^{-1} M_\alpha (x^i) = \gamma^{-1} [\alpha^i]_{\mathbf{b}}.
%\]

\section{Number Theoretical Open Problems} \label{NT}
In this section we will describe a number of open problems in number theory that are motivated by attacks to PLWE and RLWE, some very speculative and some more precise.

\subsection{Conditions for smearing}

As described in Section \ref{sec:attack}, we are concerned with the map
\[
        \pi: P_q \rightarrow \FF_q, \quad g(x) \mapsto g(\alpha).
\]

\begin{question}
        For which subsets $\mathcal{S} \subset P_q$, is the image $\pi(\mathcal{S}) = \FF_q$?
\end{question}

If $\pi(\mathcal{S}) = \FF_q$, we will say that $\mathcal{S}$ \emph{smears} under $\pi$.

Partial solutions to this problem may come in a wide variety of shapes.  For example, can one prove that almost all $\mathcal{S}$ of a given size smear?  Can one characterise the types of situations that lead to a negative answer (e.g. $\alpha=1$ and $\mathcal{S}$ consisting of polynomials of small coefficients)?  What if we restrict to the PLWE case, where $\mathcal{S}$ consists of polynomials with small coefficients?  Or the RLWE case, where $\mathcal{S}$ is the image of a canonical discretized Gaussian?

\bigskip

\subsection{The spectral distortion of algebraic numbers, and Mahler measure}

By Section \ref{sec:reduc}, the normalized spectral norm of $N_\alpha$ is a property of any algebraic number $\alpha$ for which $\ZZ[\alpha]$ is a maximal order.  We will therefore denote it $\rho_\alpha$, and call it the \emph{spectral distortion of $\alpha$}.  It measures the extent to which the power basis $\alpha^i$ is distorted from the canonical basis of the associated number field. Recall from Section~\ref{reduction} that for number rings with small spectral distortion we expect to have an equivalence between the RLWE and PLWE problems.  For completeness, we state a slightly more general definition, separate from its cryptographic origins, as follows:

\begin{definition}
  Let $\alpha$ be an algebraic number of degree $n$ and $K = \QQ(\alpha)$.  Let $M$ be the matrix whose columns are given by $\theta(\alpha^i)$, where  $ \theta: K \rightarrow \RR^n$,
 % is given by
\begin{align*}
\theta(r) = (\sigma_1(r), \ldots, \sigma_{s_1}(r), & Re(\sigma_{s_1+1}(r)),  \ldots Re(\sigma_{s_1 + s_2}(r)),\\
& Im(\sigma_{s_1 + 1}(r)), \ldots, Im(\sigma_{s_1+s_2}(r)) )
\end{align*}
where the $\sigma_i$ are the $s_1 + s_2$ complex embeddings of $K$, ordered so that the $s_1$ real embeddings are first, and the $s_2$ complex embeddings are paired so that $\overline{\sigma_{s_1+k}} = \sigma_{s_1 + s_2 + k}$.  The \emph{spectral distortion of $\alpha$} is $||M||_2 / (\operatorname{det}(M))^{\frac{1}{n}}$.
\end{definition}

\begin{question}
        What are possible spectral distortions of algebraic numbers?
\end{question}

%\begin{proposition}
It follows from the special properties of $2$-power roots of unity that they have spectral distortion equal to $1$.
%\end{proposition}
However, even other roots of unity do not have spectral distortion equal to $1$ (and this is
what necessitates the more elaborate RLWE-to-PLWE reduction argument given in~\cite{DD12} for
cyclotomic rings which are not $2$-power cyclotomics).

Is the spectral distortion a continuum, or is the collection of values discrete in regions of $\RR$?  Does this relate to Mahler measure?

% define Mahler measure of a polynomial
The Mahler measure of a polynomial can be defined as the product of the absolute values of the
roots which lie outside the unit circle in the complex plane, times the absolute value of the leading coefficient.  For a polynomial
$$f(x) = a(x-\alpha_1)(x-\alpha_2) \cdots (x-\alpha_n)$$ the Mahler measure is
$$M(f) := |a| \prod_{|\alpha_i| \ge 1} |\alpha_i|.$$
The Mahler measure of an algebraic number $\alpha$ is defined as the Mahler measure of its minimal polynomial.

% examples where small Mahler measure corresponds to small distortion

Interestingly, polynomials which have small Mahler measure (all roots very close to $1$ in absolute value), seem to have small spectral distortion.  For example, consider ``Lehmer's polynomial'', the polynomial with the smallest known Mahler measure greater than $1$:
$$f(x)= x^{10}+x^9-x^7-x^6-x^5-x^4-x^3+x+1.$$
The Mahler measure is approximately $1.176$, and the spectral distortion for  its roots is
approximately $3.214$.  This spectral distortion is rather small, and compares favorably for example with the spectral distortion for $11^{th}$ roots of unity, which is approximately $2.942$.  Other
examples of polynomials with small Mahler measure also have small spectral distortion:
$f(x)=x^3-x+1$ has Mahler measure approximately $1.324$ and spectral distortion approximately $1.738$.

To explain the phenomenon observed for polynomials with small Mahler measure and
to relate the Mahler measure to the spectral norm, we need to have some estimate on the spectral norm in terms of the entries of the matrix.  The entries of the matrix $M$ are powers of the roots $\{ \alpha_j\}$ of the minimal polynomial.
When the Mahler measure is small, the entries of the matrix $M$ have absolute value close to $1$, since the absolute values of the roots are as close as possible to $1$.  To make the connection with the spectral norm more precise, \cite{Nikiforov}
gives an improvement on Schur's bound and expresses the bound on the largest singular value in terms of the entries of the matrix.  Thus we can use Schur's bound or this improvement to see that polynomials with small Mahler measure must have relatively small spectral norm.

It could also be interesting to look at other properties of $M$, such as the entire vector of singular values of $M$, its conditioning number, etc.

\bigskip

\subsection{Galois versus Monogenic}

 We say that $K$ is {\it monogenic} if the ring of integers $R$ of $K$ is monogenic, i.e.,  a simple ring extension $\mathbb Z[\beta]$ of $\mathbb Z.$ In this case, $K$ will have an integral basis of the form $\{1,\beta, \beta^2,\ldots, \beta^{n-1}\}$ which is called a \emph{power integral basis}.  In this section we will focus on properties (1) and (4) from the introduction.

%\begin{question}  How can one generate number fields which are simultaneously Galois and monogenic?
%\end{question}

\begin{example} The following are examples of number fields that are both Galois and monogenic:

\begin{itemize}
\item Cyclotomic number fields, $K=\mathbb Q(\zeta_n)$ where $\zeta_n$ is a primitive $n$th root of unity,
\item Maximal real subfields of cyclotomic fields, $K=\mathbb Q(\zeta_n+\zeta_N^{-1})$,
\item Quadratic number fields $K=\mathbb Q(\sqrt{d})$.
\end{itemize}
\end{example}

\begin{question}
Are there fields of cryptographic size which are Galois and monogenic, other than the cyclotomic number fields and their maximal real subfields? How can one construct such fields explicitly?
\end{question}

The problem of characterizing all  number fields which are monogenic goes back to Hasse, however,  a complete solution is not known to date.  Here we will summarize some of the known related results.

\begin{proposition} \cite{nakahara}
Let $p$ be a prime  and $K$  a Galois extension of $\mathbb Q$ of degree $n$. Let $e$ be the ramification index of $p$ and $f$ be the inertia degree of $p$. If one of the conditions below is satisfied then $K$ is not monogenic:
\begin{itemize}
\item  If $f=1$: $e p < n$
\item If $f \geq 2$: $e p^f \leq n+e-1$
\end{itemize}
\end{proposition}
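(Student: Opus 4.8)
The plan is to argue by contradiction, showing that monogenicity would force the existence of more distinct monic irreducible polynomials of degree $f$ over $\FF_p$ than actually exist. Suppose $\mathcal{O}_K = \mathbb{Z}[\beta]$ and let $m(x) \in \mathbb{Z}[x]$ be the monic minimal polynomial of $\beta$, of degree $n$. Since $[\mathcal{O}_K : \mathbb{Z}[\beta]] = 1$, the Dedekind--Kummer theorem applies at $p$ with no obstruction, so the factorization of $p$ in $\mathcal{O}_K$ is read off from the factorization of $\overline{m}(x)$ over $\FF_p$: writing $\overline{m} = \prod_i \phi_i^{e_i}$ with the $\phi_i$ distinct monic irreducibles, one has $p\mathcal{O}_K = \prod_i \mathfrak{p}_i^{e_i}$ with residue degree $f_i = \deg \phi_i$. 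Because $K/\mathbb{Q}$ is Galois, all the $e_i$ equal $e$, all the $f_i$ equal $f$, and the number of factors is $g = n/(ef)$. Hence
$$\overline{m}(x) = \prod_{i=1}^{g} \phi_i(x)^{e},$$
where $\phi_1, \dots, \phi_g$ are \emph{distinct} monic irreducibles of degree $f$ over $\FF_p$. Writing $I_p(f)$ for the number of monic irreducibles of degree $f$ over $\FF_p$, monogenicity therefore forces $I_p(f) \ge g$, and the whole proof reduces to showing that each hypothesis implies the strict inequality $I_p(f) < g$.

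For the counting step I would use the identity $p^f = \sum_{d \mid f} d\, I_p(d)$, obtained by sorting the $p^f$ elements of $\FF_{p^f}$ according to the degree $d \mid f$ of their minimal polynomial over $\FF_p$. This yields at once $f\, I_p(f) \le p^f$, that is $I_p(f) \le p^f/f$. Moreover, for $f \ge 2$ the term $d=1$ contributes $I_p(1) = p \ge 2 > 0$ to the subtracted sum, so the inequality is \emph{strict}: $I_p(f) < p^f/f$ whenever $f \ge 2$.

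It remains to translate the two hypotheses, using $n = efg$. When $f \ge 2$, the condition $ep^f \le n + e - 1$ becomes $e(p^f - fg) \le e - 1$; since the left side is a multiple of $e$ while $0 \le e-1 < e$, this is equivalent to $p^f \le fg$, i.e. $p^f/f \le g$. Combined with the strict bound $I_p(f) < p^f/f$, this gives $I_p(f) < p^f/f \le g$, hence $I_p(f) < g$, contradicting monogenicity. When $f = 1$, we have $I_p(1) = p$ exactly and $g = n/e$, so the hypothesis $ep < n$ reads precisely $p < g$, i.e. $I_p(1) < g$, again a contradiction.

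I expect the only genuine subtlety to be this boundary bookkeeping: ensuring that the non-strict hypothesis in the $f \ge 2$ case still produces a strict deficiency $I_p(f) < g$, which is exactly why the strictness of $I_p(f) < p^f/f$ is needed to absorb the boundary case $p^f = fg$. This also explains the asymmetry between the hypotheses — for $f=1$ the estimate $I_p(1) \le p^1/1$ is an equality and offers no slack, so the hypothesis must itself be strict, whereas for $f \ge 2$ the count of irreducibles lies strictly below $p^f/f$. Beyond this there is no deep obstacle: the argument rests only on correctly invoking Dedekind--Kummer (legitimate because the index is $1$) and on the elementary enumeration of irreducible polynomials over a finite field.
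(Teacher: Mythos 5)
Your argument is correct and complete: the Dedekind--Kummer step is legitimate precisely because the index $[\mathcal{O}_K:\mathbb{Z}[\beta]]$ is $1$, the transitivity of the Galois action justifies the uniform $e$, $f$, $g$, and the counting of monic irreducibles over $\FF_p$ (with the strictness $I_p(f) < p^f/f$ for $f \ge 2$ coming from the $d=1$ term) is exactly what is needed to absorb the non-strict hypothesis in the second case. Note that the paper does not actually reproduce a proof -- it only states the result with a citation to Nakahara--Shah -- but your argument is the standard ``not enough irreducible polynomials of degree $f$ over $\FF_p$'' obstruction (the classical common inessential discriminant divisor argument), and it establishes the proposition as stated.
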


Let $K$ be a Galois extension of prime degree $\ell$.  (Such extensions are called cyclic extensions.) The following result of Gras \cite{Gras} states that cyclic extensions are often non-monogenic.

\begin{theorem}\cite{Gras}
Any cyclic extension $K$ of prime degree $\ell \geq  5$ is non-monogenic except for the maximal real subfield of the  $(2\ell +1)$-th cyclotomic field with prime conductor $2\ell+1$.
\end{theorem}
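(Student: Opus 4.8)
The plan is to attack the problem through the theory of common index divisors, supplemented by a direct analysis of the index form for the fields that escape that first net. Throughout, write $G = \Gal(K/\QQ) = \langle \sigma \rangle \cong \ZZ/\ell\ZZ$ and record that, because $G$ is cyclic of prime order, every rational prime has one of only three splitting types in $K$: an unramified prime is either split completely (into $\ell$ primes of residue degree $1$) or inert (one prime of residue degree $\ell$), while a ramified prime---necessarily $\ell$ itself or a prime $p \equiv 1 \pmod{\ell}$---is totally ramified (one prime of residue degree $1$), a wildly ramified $\ell$ only enlarging $d_K$.

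First I would invoke a classical criterion (going back to Hensel): a prime $p$ divides $[\Ocal_K : \ZZ[\theta]]$ for every $\theta \in \Ocal_K$ (that is, $p$ is a common index divisor, which forces non-monogenicity) precisely when, for some $f$, the number of primes above $p$ of residue degree $f$ exceeds the number $N_p(f)$ of monic irreducibles of degree $f$ over $\FF_p$. Running this against the three splitting types above, the only way to produce more prime ideals of a fixed residue degree than there are monic irreducibles of that degree is $f = 1$ with $p$ split completely: then there are $\ell$ primes of degree $1$ but only $p$ monic linear polynomials, so $p$ is a common index divisor exactly when $p < \ell$ and $p$ splits completely in $K$. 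This immediately settles non-monogenicity for every $K$ in which some prime $p < \ell$ splits completely.

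The substance of the theorem is the complementary case, in which no prime $p < \ell$ splits completely; equivalently the order-$\ell$ Dirichlet character $\chi$ cutting out $K$ satisfies $\chi(p) \neq 1$ for all primes $p < \ell$. Here there is no common index divisor, so I would pass to the index form: fixing an integral basis, the condition $\Ocal_K = \ZZ[\theta]$ becomes the assertion that the homogeneous index form $I(x_1,\dots,x_{\ell-1})$ of degree $\binom{\ell}{2}$ represents $\pm 1$ at the coordinate vector of a generator $\theta$, normalized by subtracting a rational integer so that $\Tr(\theta)$ is controlled. The goal is to show this is impossible: I would reduce $I$ modulo the primes $p < \ell$ and at the ramified primes, using the now-known (inert or totally ramified) factorizations to pin down the possible residues of $\theta$ and thereby force $|I| > 1$; equivalently, one shows $\Disc(\theta)$ strictly exceeds $d_K$ for every generator. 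The one field that survives is the maximal real subfield $K = \QQ(\zeta_{2\ell+1})^+$ of prime conductor $2\ell+1$: there $d_K = (2\ell+1)^{\ell-1}$ is as small as possible, no prime $p < \ell$ can split (a split prime would satisfy $p \equiv \pm 1 \pmod{2\ell+1}$, impossible for $2 \le p \le \ell-1$), and the Gaussian period $\theta = \zeta_{2\ell+1} + \zeta_{2\ell+1}^{-1}$ furnishes an explicit power integral basis, so $K$ is monogenic.

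The main obstacle is squarely the last step: for the fields with no common index divisor one cannot rely on a single local obstruction, and must instead show that the high-degree index form misses the value $\pm 1$ entirely. A crude count suggests this is in fact the typical case---the expected number of primes $p < \ell$ splitting completely is $\pi(\ell-1)/\ell \to 0$---so the common index divisor argument handles only a sparse family and the index-form analysis carries the bulk of the theorem. This is the delicate heart of Gras's argument, where the specific arithmetic of cyclic prime-degree fields (the description of $\Ocal_K$ via Gaussian periods, the size of $d_K$ relative to the conductor, and congruence constraints at the primes $p < \ell$ and at the ramified primes) must be combined to isolate the lone cyclotomic exception from all remaining fields.
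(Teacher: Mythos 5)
The paper does not prove this theorem; it quotes it from Gras's 1986 paper, so your attempt has to stand on its own. It does not. The common-index-divisor step is fine: for a cyclic field of prime degree $\ell$ every unramified prime is either totally split or inert and every ramified prime is totally ramified, so by the Hensel--Engstrom criterion the only possible common index divisors are primes $p<\ell$ that split completely, and such a prime does force non-monogenicity. But, as you yourself observe, this handles only a thin family: for a ``typical'' cyclic field of degree $\ell$ no prime below $\ell$ splits, and the entire content of the theorem lies in the remaining case. There your argument is only a stated intention --- ``reduce $I$ modulo the primes $p<\ell$ and at the ramified primes \dots and thereby force $|I|>1$'' --- and that plan cannot work as described: congruence conditions at finitely many primes can never show that a degree-$\binom{\ell}{2}$ form in $\ell-1$ variables omits the values $\pm 1$, since the form will generically represent units locally everywhere. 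Declaring this step ``the delicate heart of Gras's argument'' is an accurate description of the gap, not a proof.

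For the record, Gras's actual mechanism in the hard case is global and unit-theoretic, not local: if $\Ocal_K=\ZZ[\theta]$ then the index form factors as a product over pairs of conjugates, and its being $\pm1$ forces the ratios $(\sigma^i\theta-\sigma^j\theta)/(\sigma^k\theta-\sigma^m\theta)$ to be units of $\Ocal_K$; one then exploits the $\ZZ[\Gal(K/\QQ)]$-module structure of the unit group (working modulo $\ell$, where the group ring is local) to show these units satisfy relations that are impossible unless $2\ell+1$ is prime and $K$ is the real subfield of $\QQ(\zeta_{2\ell+1})$, where $\zeta+\zeta^{-1}$ does give a power integral basis. If you want to complete your write-up you would need to supply that argument (or an equivalent one); the skeleton you have proves only the easy sub-case plus the monogenicity of the exceptional field.
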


\begin{theorem}\cite{Gras86}
Let $n \geq 5$ be relatively prime to $2,3$. There are only finitely
many abelian number fields of degree $n$ that are monogenic. \end{theorem}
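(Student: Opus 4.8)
The plan is to prove the statement in the following quantitative form: a monogenic abelian field $K$ of degree $n$, with $n\ge 5$ and $\gcd(n,6)=1$, must have discriminant bounded in terms of $n$ alone. Once such a bound is in hand, finiteness is immediate from the Hermite--Minkowski theorem, which guarantees that only finitely many number fields of a fixed degree have discriminant below a prescribed bound. So the entire task is to extract an absolute bound on $|d_K|$.

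First I would record the algebraic data. Monogenicity gives $\beta$ with $\mathcal{O}_K=\ZZ[\beta]$, and this generator is only well-defined up to the transformations $\beta\mapsto\pm\beta+m$ with $m\in\ZZ$, which I would use to normalize $\beta$ (for instance to have trace $0$). Since $K$ is abelian, hence Galois, every conjugate $\beta_\sigma:=\sigma(\beta)$ lies in $K$, each difference $\delta_{\sigma\tau}:=\beta_\sigma-\beta_\tau$ is an algebraic integer of $K$, and $\prod_{\sigma\ne\tau}\delta_{\sigma\tau}=\pm d_K$ because $d_K=\Disc(f)$ for the minimal polynomial $f$ of $\beta$. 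In particular the norm of each $\delta_{\sigma\tau}$ divides $d_K$, so $(\delta_{\sigma\tau})$ is supported only on primes of $K$ above rational primes ramifying in $K$. I would also note at the outset where the hypothesis enters: since $n$ is odd, the group of roots of unity of $K$ is $\mu_K=\{\pm1\}$, as any $\zeta_m\in K$ would force $\varphi(m)\mid n$, while $\varphi(m)$ is even for every $m\ge 3$.

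The engine of the argument is the three-term additive relation among conjugate differences. For any three conjugates one has $\delta_{12}+\delta_{23}+\delta_{31}=0$, which upon dividing by $\delta_{13}$ becomes the unit equation
\[
 x+y=1,\qquad x:=\frac{\delta_{12}}{\delta_{13}},\quad y:=\frac{\delta_{23}}{\delta_{13}}.
\]
By the previous paragraph $x$ and $y$ are $S$-units, where $S$ consists of the archimedean places together with the places above the ramified primes of $K$. For fixed $S$, the $S$-unit equation has only finitely many solutions, and effective height bounds (via Baker's method, or Evertse's bounds) would constrain $x$ and $y$, hence the spread of the conjugates $\beta_\sigma$, and ultimately $|d_K|$.

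The main obstacle, and the reason $\gcd(n,6)=1$ is genuinely needed rather than merely $n$ large, is that the set $S$ grows with the field, so a naive application of $S$-unit finiteness is not uniform in $K$. The crux is to exploit the abelian structure, under which $\Gal(K/\QQ)$ permutes the differences transitively, to bound all archimedean absolute values of $x$ and $y$ simultaneously; Kronecker's theorem then forces $x,y$ to be roots of unity, hence elements of $\mu_K=\{\pm1\}$. But $x+y=1$ has no solution with $x,y\in\{\pm1\}$, so these torsion solutions are unavailable and every genuine solution has bounded height uniformly in $K$, yielding the discriminant bound. This is exactly where coprimality to $3$ (and $2$) is essential: the exceptional torsion solutions of $x+y=1$ are the sixth-root-of-unity pairs with $\zeta_6+\zeta_6^{-1}=1$, which live in $\QQ(\zeta_6)=\QQ(\sqrt{-3})$ and are present precisely when $2$ or $3$ divides $n$. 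Their availability is what permits the infinite families of monogenic quadratic fields ($n=2$) and monogenic cyclic cubic fields such as the simplest cubic fields ($n=3$); eliminating them collapses the configuration and delivers the bound, after which Hermite--Minkowski finishes the proof. Making the uniform Kronecker/height step rigorous, with explicit constants depending only on $n$, is the technically demanding heart of the argument.
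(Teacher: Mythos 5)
The paper gives no proof of this statement---it is quoted directly from \cite{Gras86}---so your attempt can only be measured against Gras's published argument. Your outer frame (bound $|d_K|$ in terms of $n$ alone, then invoke Hermite--Minkowski) is the right one, and your setup is the standard Gy\H{o}ry-style reduction: normalize the generator, note that the conjugate differences $\delta_{\sigma\tau}$ are algebraic integers supported on ramified primes with $\prod\delta_{\sigma\tau}=\pm d_K$, and extract unit equations $x+y=1$ from the three-term additive relations. But that machinery, by itself, proves a different theorem (Gy\H{o}ry's: a \emph{fixed} field has finitely many monogenic generators up to equivalence). The entire content of Gras's theorem is uniformity over all abelian $K$ of degree $n$, and that is exactly the step you do not supply.

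The gap is the sentence claiming the abelian structure lets one ``bound all archimedean absolute values of $x$ and $y$ simultaneously,'' after which Kronecker forces $x,y$ to be roots of unity. In the Galois monogenic setting $x=\delta_{12}/\delta_{13}$ is a unit, so its archimedean absolute values multiply to $1$; ``bounding them all by $1$'' is therefore literally equivalent to showing they all equal $1$, i.e.\ to the conclusion that $x$ is a root of unity. Nothing in Galois transitivity provides this, and it is false in the nearest test case: the infinitely many monogenic cyclic cubic fields \cite{DK} produce unit-equation solutions $\varepsilon+(1-\varepsilon)=1$ with $\varepsilon$ a fundamental unit of unbounded height, not a root of unity. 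So no soft Kronecker argument can close the proof, and the hypothesis $\gcd(n,6)=1$ must enter structurally rather than through the torsion solutions you describe; indeed the pairs $\zeta_6+\bar\zeta_6=1$ require $\QQ(\zeta_6)\subset K$, hence $2\mid n$, and do not explain the cubic family at all. Gras's actual mechanism is different in kind: the quotients of conjugate differences generate an explicit Galois-stable subgroup of $\mathcal{O}_K^{\times}$ whose index and regulator can be controlled using the group-ring structure of the unit group of an abelian field, and comparison with analytic lower bounds then forces the conductor to be bounded when $\gcd(n,6)=1$. That structural input is the missing heart of the argument.
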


For number fields of smaller degree it may be possible to give a complete characterization. For instance, for cyclic cubic extensions $K$, Gras \cite{Gras73} and Archinard \cite{Ar} gave  necessary and sufficient conditions for $K$ to be monogenic.

Even though monogenic fields are rare in the abelian case for large degree, Dummit and Kisilevsky \cite{DK} have shown that
there exist infinitely many cyclic cubic fields which are monogenic. A result of Kedlaya~\cite{K} implies that there are infinitely many monogenic number fields  of any given signature. In fact, we expect monogenicity frequently:  if $f$ is an irreducible polynomial with squarefree discriminant then the number field $K$ obtained by adjoining a root of $f$ to $\mathbb Q$ is monogenic.  For polynomials of fixed degree $\ge 4$ whose coefficients are chosen randomly, it is conjectured that with probability $\gtrsim 0.307$, the root will generate the ring of integers of the associated number field \cite{K}.  However, to require $K$ also to be Galois is much more restrictive. Moreover, for fields of cryptographic size ($n \sim 2^{10}$), the discriminant of $f$ is too large to test whether it is squarefree. Therefore testing whether an arbitrary number field of cryptographic size is monogenic is not known to be feasible in general.

%\bigskip

\subsection{Finding roots of small order mod $p$}

We have seen that a root of small order of $f(x)$ modulo $q$ provides a method of attack on the PLWE problem in the ring $\ZZ_q[x]/(f(x))$.  The attack is even more effective if, in addition, this root is small as a minimal residue modulo $q$ (`minimal' meaning the smallest in absolute value). See Example~\ref{Ex4.1}, Case(3) in Section~\ref{residue}. Cyclotomic fields are protected against this attack by the observation that the roots of a cyclotomic polynomial modulo $q$ are of full order $n$.  However for `random' polynomials, there is a priori no particular reason to expect roots of any particular order modulo $q$, or to expect the roots to be small.  Motivated by these two requirements, it is natural to ask the following question:

\begin{question}
For random polynomials $f(x)$ and random primes $q$ for which $f(x)$ has a root $\alpha$ modulo $q$, what can one say about the order of $\alpha$ modulo $q$?
\end{question}

A special case of this question, for $f$ monic of degree one, is to ask, for a fixed $a$, how often is $a$ a primitive root modulo $p$?  A famous conjecture of Artin states that this should happen for infinitely many $p$ provided $a$ is not a perfect square or $-1$, and describes the density of such primes.  This has been the subject of much research, and the question above is a sort of number field analogue.  Some investigations in the direction of a number field analogue of Artin's conjecture exist; for a gateway to the literature, see \cite{M-P, Samuel}.

Computationally, to locate polynomials having a small root of small order, it is easiest to start with the desired order, find a suitable $q$, and then build the polynomial.  The algorithm is as follows:

\vspace{0.1in}
\noindent
{\bf Algorithm 2}

{\bf Input:} Integers $r, n, q_0$ such that $r>2$ represents the desired order, $n\ge1$ represents the desired degree, and $q_0>\log_2(n)$ represents the desired bitsize of $q$.
\begin{enumerate}
        \item Let $s$ be the degree of the cyclotomic polynomial $\Phi_r(x)$.
        \item Let $a = 1$ (our candidate for the element of order $r$ mod $q$).  Test $\Phi_r(a)$ for primality.  If it is a prime of approximate bitsize $q_0$, let $q$ be this prime.  Otherwise, increment $a$ and try again.
        \item Once $a$ and $q$ are fixed, choose a set $S$ of $n$ elements of $\ZZ/q\ZZ$ that includes $a$ and the other $n-1$ smallest minimal residues (or choose any other subset of residues).
        \item Choose $i=1$ and increment $i$ until the polynomial
                \[
                        f(x) = \prod_{s \in S}(x-s)+qi
                \]
              of degree $n$ is irreducible.
\end{enumerate}
{\bf Output:} A monic irreducible polynomial $f(x) \in \ZZ[x]$, a prime $q$ roughly of size $q_0$, such that $f$ splits modulo $q$, and $a \in \ZZ/q\ZZ$ such that $f(a)\equiv 0 \pmod q$ and $a^r \equiv 1 \pmod q$.

Note that if one wishes to relax the condition that $f$ splits modulo $q$, one could take $f(x) = (x-a)^n + q$, which is irreducible, to avoid Step 4.

\vspace{0.1in}

Using this method, it is easy to find examples of $(K,q)$ such that $f(x)$ has a root of small order modulo $q$.  Among them, an example of cryptographic size is afforded by $n = 2^{10}$, $r=3$, $a = 33554450$, $q = 1125901148356951$ and $i=1$ (the polynomial is too unwieldy to print here).  Using the last two parts of the method, one can, in fact, easily construct polynomials having as roots many elements of small order modulo $q$.

A simpler starting point is the following second question:

\begin{question}
        What is the distribution of elements of small order among residues modulo $q$?
\end{question}

There is a significant body of research on the distribution of primitive roots (see Artin's conjecture) and quadratic residues.  More recently there have been advances on the distribution of elements of small order.  For example, the number of elements of bounded size and specified order is bounded above in \cite{BKS2}; see also \cite{Bour,BKS,KS}.  More useful in our present context, for the purposes of finding elements of small order, would be a guarantee that such elements exist in some small interval.

A more precise question is as follows:

\begin{question}  What is the smallest residue modulo a prime $q$ which has order exactly $r$ ?
\end{question}

Let $q$ be a prime and $r>2$.  Let $n_{r,q}$ represent the smallest residue modulo $q$ which has order exactly $r$.  A first observation is the following (which allows us to choose a more suitable starting point for $a$ in the algorithm above).

\begin{proposition}
        \label{prop:nrq}
                Let $\varphi(r)$ represent the Euler function, giving the number of positive integers less than and coprime to $r$.  Then, if $r$ has at most two distinct prime factors, which are odd, then
                        \[
                                |n_{r,q}| \ge (q/\varphi(r))^{1/\varphi(r)}
                        \]
\end{proposition}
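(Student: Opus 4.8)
The plan is to realize $n_{r,q}$ as a root of the $r$-th cyclotomic polynomial $\Phi_r$ modulo $q$, and then to play an upper bound against a lower bound for the size of the integer $\Phi_r(n_{r,q})$. The hypothesis on the prime factorization of $r$ enters precisely through its effect on the \emph{coefficients} of $\Phi_r$, which is what makes the upper bound strong enough.

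First I would recall the standard fact that an element $a \in \FF_q^\times$ has order exactly $r$ if and only if $\Phi_r(a) \equiv 0 \pmod q$; this follows from the factorization $x^r - 1 = \prod_{d \mid r}\Phi_d(x)$ together with the cyclicity of $\FF_q^\times$ (and presupposes $r \mid q-1$, without which $n_{r,q}$ is undefined and the statement is vacuous). Taking $a = n_{r,q}$ to be the root of smallest absolute value, we thus have $\Phi_r(a) \equiv 0 \pmod q$. Next, regarding $a$ as an ordinary integer, $\Phi_r(a)$ is a \emph{nonzero} integer: for $r > 2$ the complex roots of $\Phi_r$ are primitive $r$-th roots of unity, none of which is an integer, so $\Phi_r$ has no integer zeros. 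Hence $\Phi_r(a)$ is a nonzero multiple of $q$, which gives the lower bound $|\Phi_r(a)| \ge q$.

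The key number-theoretic input is the matching upper bound. Here I would invoke the classical flatness result (Migotti's theorem on binary cyclotomic polynomials, extended to prime powers via $\Phi_{p_1^{a}p_2^{b}}(x) = \Phi_{p_1 p_2}\!\left(x^{p_1^{a-1}p_2^{b-1}}\right)$): when $r$ is divisible by at most two distinct odd primes, every coefficient of $\Phi_r$ lies in $\{-1,0,1\}$. Setting $d = \deg\Phi_r = \varphi(r)$ and using $|a| \ge 2$ — which holds because an element of order $r > 2$ is congruent to neither $0$ nor $\pm 1$ modulo $q$ — I would estimate
\[
  |\Phi_r(a)| \;\le\; \sum_{k=0}^{d}|a|^{k} \;=\; |a|^{d}\sum_{j=0}^{d}|a|^{-j} \;<\; |a|^{d}\,\frac{|a|}{|a|-1} \;\le\; 2\,|a|^{\varphi(r)}.
\]
Combining with $|\Phi_r(a)| \ge q$ gives $q < 2\,|a|^{\varphi(r)} \le \varphi(r)\,|a|^{\varphi(r)}$, where the final step uses $\varphi(r) \ge 2$ for $r > 2$. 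Rearranging yields $|n_{r,q}| > (q/\varphi(r))^{1/\varphi(r)}$, which in particular gives the asserted inequality.

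The genuinely routine parts — identifying the order-$r$ elements with the roots of $\Phi_r$, verifying $|a| \ge 2$, and the geometric-series estimate — present no difficulty. The one essential obstacle, and the place where the hypothesis is indispensable, is the coefficient bound: the inequality simply fails for $r$ with three or more distinct prime factors, the first witness being $\Phi_{105}$, whose coefficients reach $-2$. Thus the whole argument rests on Migotti's result, and the only real care needed is to state it in the prime-power-inclusive form that exactly matches the hypothesis that $r$ has at most two distinct prime factors, both odd.
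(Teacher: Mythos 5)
Your proposal is correct and follows essentially the same route as the paper: identify $n_{r,q}$ as a root of $\Phi_r$ modulo $q$, deduce $|\Phi_r(n_{r,q})| \ge q$ from the nonvanishing of $\Phi_r$ at integers, and invoke Migotti's flatness result to bound $|\Phi_r(n_{r,q})|$ above by $\varphi(r)|n_{r,q}|^{\varphi(r)}$. Your write-up is in fact slightly more careful than the paper's (justifying $|n_{r,q}|\ge 2$, handling the prime-power case of Migotti, and using the geometric series rather than a bare term count), but the underlying argument is identical.
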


\begin{proof}
        The element $n_{r,q}$ is a root of the $r$-th cyclotomic polynomial, of degree $\varphi(r)$, modulo $q$.  Since $\Phi_r(n_{r,q}) \neq 0$ as an integer relation, it must be that $|\Phi_r(n_{r,q})| \ge q$.  It is known that under the given hypotheses on the factorisation of $r$, the coefficients of $\Phi_r$ are chosen from $\{ \pm 1, 0\}$ (\cite{Migotti}).  Therefore $|\Phi_r(n_{r,q})| \le \varphi(r)|n_{r,q}^{\varphi(r)}|$ from which the result follows.
\end{proof}

In general, combining upper and lower bounds on $n_{r,q}$ would limit the search space for an element of small order.

\begin{remark}

        \begin{enumerate}
                \item Other restrictions on the coefficients of $\Phi_r$ give rise to similar results.  To derive an asymptotic statement, one could turn to asymptotic results such as \cite{Erdos}.
                \item The case of $r=3$, the study of $n_{3,q}$ gives the full story, as the cube roots of unity are of the form
\[
        1, n_{3,q}, -n_{3,q}-1.
\]
\item In general, the primes $q$ such that $n_{r,q} = a$ for a fixed $a$ and $r$ are among those dividing $\Phi_r(a)$, hence there are finitely many.
\item Elliott has some results on $k$-th power residues \cite{Elliott}.%, which we quote here for comparison. Let $\ell_{k,q}$ represent the least positive residue which is \emph{not} a $k$-th power modulo $p$ for $p \equiv 1 \pmod k$ (otherwise set $\ell_{k,q}=0$).  Then $\sum_{p < X} \ell_{k,q} \sim C_{k} x / \log x$ \cite{A problem of erdos concerning power residue sums}.  Let $m_{k,q}$ represent the least \emph{prime} which \emph{is} a $k$-th power residue modulo $q$.  He has shown that for any fixed positive real number $\epsilon > 0$,
%        \[
%                m_{k,q} = O( q^{\frac14(k-1)+\epsilon} ).
%        \]
\end{enumerate}
\end{remark}

We will call $n_{r,q}$ \emph{minimal} if, in addition to being the smallest residue of order $r$ modulo $q$, it also satisfies $\Phi_r(n_{r,q}) = \pm q$.  For non-minimal $n_{r,q}$, the lower bound in Proposition \ref{prop:nrq} increases.  A conjecture of Bouniakowski implies that minimality happens infinitely often.

\begin{conjecture}[Bouniakowski, \cite{Boun}]
        Let $f(x) \in \ZZ[x]$ be a non-constant irreducible polynomial such that $f(x)$ is not identically zero modulo any prime $p$.  Then $f(n)$ is prime for infinitely many $n \in \ZZ$.
\end{conjecture}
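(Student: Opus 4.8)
The plan is to model the argument on the one instance of this statement that is actually a theorem --- the linear case $f(x) = ax+b$ with $\gcd(a,b)=1$, which is Dirichlet's theorem on primes in arithmetic progressions --- and then to try to push the analytic machinery up to higher degree. First I would normalize $f$: replacing $f$ by $-f$ if needed I may assume the leading coefficient is positive, and dividing out the content I may assume the coefficients have $\gcd$ equal to $1$. The hypothesis that $f$ is not identically zero modulo any prime $p$ guarantees, for each $p$, a residue $a$ with $f(a) \not\equiv 0 \pmod{p}$; hence the fixed divisor $\gcd_{n \ge 1} f(n)$ equals $1$, so there is no local congruence obstruction forbidding $f(n)$ from being prime.

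Next I would pin down the expected count. Writing $\pi_f(x) = \#\{\,n \le x : f(n) \text{ is prime}\,\}$, the Bateman--Horn heuristic predicts
\[
\pi_f(x) \sim \frac{C(f)}{\deg f}\int_2^x \frac{dt}{\log t}, \qquad C(f) = \prod_p \frac{1 - \omega_f(p)/p}{1 - 1/p},
\]
where $\omega_f(p)$ is the number of roots of $f$ modulo $p$. The non-vanishing hypothesis forces $\omega_f(p) < p$ for every $p$, so each Euler factor is positive and $C(f) > 0$; thus the predicted count tends to infinity. A genuine proof would have to produce this main term, or at least a positive lower bound of the same order, unconditionally.

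To chase such a lower bound I would set up a sieve on the sequence $\{\,f(n) : n \le x\,\}$, sifting by small primes and using $\omega_f(p) < p$ to conclude that a positive proportion of values survive, in the hope that infinitely many survivors are prime. Selberg's sieve and the large sieve do deliver an upper bound $\pi_f(x) \ll C(f)\, x/\log x$ of the correct shape, and refined almost-prime results exist (for instance the theorem that a suitable irreducible quadratic represents infinitely many integers with at most two prime factors). The hard part --- in fact the insurmountable part with current technology --- is passing from almost-primes to primes. This is Selberg's \emph{parity problem}: a pure sieve cannot separate integers with an even number of prime factors from those with an odd number, so it cannot by itself isolate those with exactly one. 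Producing primes requires an arithmetic input beyond sieving; in Dirichlet's linear case that input is the non-vanishing of $L(1,\chi)$, and in the only higher-degree successes (Friedlander--Iwaniec for $X^2 + Y^4$, Heath-Brown for $X^3 + 2Y^3$) it comes from special bilinear structure of those particular \emph{two-variable} forms. No analogous structure is available for a general one-variable $f$ of degree $\ge 2$. Accordingly I do not expect this plan, or any presently known method, to succeed: the statement is the single-polynomial case of Schinzel's Hypothesis~H and remains open, with not even $x^2+1$ known to take infinitely many prime values.
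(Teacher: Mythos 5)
The paper offers no proof of this statement, nor could it: this is Bouniakowski's 1857 conjecture, quoted verbatim from \cite{Boun} precisely as an unproved hypothesis on which Proposition \ref{prop:minroot} is conditioned. Your assessment is therefore correct and matches the paper's treatment --- the statement is open (the single-polynomial case of Schinzel's Hypothesis H, with even $x^2+1$ unresolved), and your account of why the natural plan fails, namely that $\omega_f(p)<p$ gives a positive Bateman--Horn constant and sieve upper bounds of the right order but the parity barrier blocks passage from almost-primes to primes absent special bilinear structure as in the two-variable Friedlander--Iwaniec and Heath-Brown results, is accurate.
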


\begin{proposition}
        \label{prop:minroot}
        Let $r >2$.  If Bouniakowski's Conjecture holds, then there are infinitely many primes $q$ for which $n_{r,q}$ is minimal.
\end{proposition}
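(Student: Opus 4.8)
The plan is to apply Bouniakowski's Conjecture directly to the cyclotomic polynomial $\Phi_r$. First I would verify that $\Phi_r$ meets the hypotheses: it is irreducible over $\ZZ$, it is non-constant (its degree $\varphi(r) \ge 2$ since $r > 2$), and it has no fixed prime divisor because $\Phi_r(0) = 1$ for every $r \ge 2$, so $\operatorname{gcd}_n \Phi_r(n) = 1$ and no prime can divide all values. Bouniakowski's Conjecture then supplies infinitely many positive integers $a$ for which $q := \Phi_r(a)$ is prime; the values $\Phi_r(a)$ grow without bound, so infinitely many distinct primes $q$ arise this way, and it suffices to show that $n_{r,q}$ is minimal for all sufficiently large such $a$.

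Fix such an $a$, large, with $q = \Phi_r(a)$ prime. Since $q \mid \Phi_r(a)$, the residue $a$ is a root of $\Phi_r$ modulo $q$; and because $q = \Phi_r(a) \ge (a-1)^{\varphi(r)} > r$, we have $q \nmid r$, so $x^r - 1$ is separable modulo $q$ and its roots of $\Phi_r$ are exactly the elements of order $r$. Hence $a$ has order exactly $r$, elements of order $r$ exist, and $n_{r,q}$ is well-defined. For $a$ large, $a < q/2$, so $a$ is already its own minimal residue, and therefore $|n_{r,q}| \le a$.

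Now I would pin down $\Phi_r(n_{r,q})$. Since $n_{r,q}$ has order $r$, it too is a root of $\Phi_r$ modulo $q$, so $q \mid \Phi_r(n_{r,q})$; and as $r > 2$ we have $\Phi_r > 0$ on $\RR$, so $\Phi_r(n_{r,q})$ is a positive multiple of $q$. Writing $\Phi_r(x) = \prod_\zeta (x - \zeta)$ over the primitive $r$-th roots of unity and using $|\zeta| = 1$ gives $\Phi_r(n_{r,q}) = \prod_\zeta |n_{r,q} - \zeta| \le (|n_{r,q}| + 1)^{\varphi(r)} \le (a+1)^{\varphi(r)}$, while the reverse triangle inequality gives $q = \Phi_r(a) \ge (a-1)^{\varphi(r)}$. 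Thus $\Phi_r(n_{r,q})/q \le \bigl((a+1)/(a-1)\bigr)^{\varphi(r)}$, which tends to $1$ as $a \to \infty$ and in particular lies below $2$ once $a$ is large enough. A positive multiple of $q$ strictly less than $2q$ must equal $q$, so $\Phi_r(n_{r,q}) = q = \pm q$, i.e. $n_{r,q}$ is minimal.

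The main obstacle is this last step: $q \mid \Phi_r(n_{r,q})$ only forces $\Phi_r(n_{r,q})$ to be \emph{some} multiple of $q$, and the substance of the argument is excluding the larger multiples. This is exactly where I use that $n_{r,q}$, being the smallest residue of order $r$, is no larger in absolute value than the particular root $a$ from which $q$ was built, together with the asymptotic comparison of $(a+1)^{\varphi(r)}$ and $(a-1)^{\varphi(r)}$. The threshold ``$a$ sufficiently large'' depends on $r$ through $\varphi(r)$ but not on the individual prime, so infinitely many admissible $a$ — hence infinitely many primes $q$ — remain, completing the proof.
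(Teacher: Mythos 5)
Your proposal is correct and follows the same route as the paper: apply Bouniakowski's Conjecture to $\Phi_r$ to obtain infinitely many primes $q = \Phi_r(a)$, then argue that $n_{r,q}$ is minimal for these $q$. The paper's own proof simply asserts the final step (that the smallest residue of order $r$ is the $x$ with $\Phi_r(x)=q$, hence minimal), whereas you actually justify it --- checking the no-fixed-prime-divisor condition via $\Phi_r(0)=1$ rather than the paper's slightly dubious appeal to $\Phi_r(1)$, and ruling out higher multiples of $q$ via the bound $\Phi_r(n_{r,q}) \le (|n_{r,q}|+1)^{\varphi(r)} \le (a+1)^{\varphi(r)} < 2(a-1)^{\varphi(r)} \le 2q$ for $a$ large --- so your write-up is, if anything, more complete than the original.
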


\begin{proof}[Proof of Prop \ref{prop:minroot}]
        The cyclotomic polynomials for $r > 1$ satisfy the Bouniakowski conditions, as they are irreducible and $\Phi_r(1) \not\equiv 0 \pmod p$ since $1$ is not of exact order $r$ modulo any $p$.  Hence $\Phi_r(x)$ takes on infinitely many prime values; for such a prime $q$, the smallest such $x$ in absolute value is $n_{r,q}$ and this is minimal.
\end{proof}

{\bf Acknowledgements.}  The authors thank the organizers of the research conference Women in Numbers 3 (Rachel Pries, Ling Long and the fourth author) and the Banff International Research Station, for making this collaboration possible.
The authors also thank the anonymous referee for detailed comments and suggestions to improve the paper, and Igor Shparlinski for useful feedback and references.

\end{document}